\title{Weak* tensor products for von Neumann algebras}
\author{Matthew Wiersma}
\address{Department of Pure Mathematics, University of Waterloo, Waterloo, ON, Canada N2L 3G1}
\email{mwiersma@uwaterloo.ca}
\newtheorem{theorem}{Theorem}[section]
\newtheorem{corollary}[theorem]{Corollary}
\newtheorem{prop}[theorem]{Proposition}
\newtheorem{lemma}[theorem]{Lemma}
\newtheorem{proposition}[theorem]{Proposition}
\theoremstyle{definition}
\newtheorem{defn}[theorem]{Definition}
\newtheorem{example}[theorem]{Example}
\newtheorem{remark}[theorem]{Remark}
\newcommand{\fn}{\!:}
\newcommand{\C}{\mathbb C}
\newcommand{\R}{{\mathbb R}}
\newcommand{\N}{{\mathbb N}}
\newcommand{\lla}{\left\langle}
\newcommand{\rra}{\right\rangle}
\newcommand{\mc}{\mathcal}
\newcommand{\mf}{\mathfrak}
\newcommand{\tn}{\textnormal}
\newcommand{\wm}{\overline\otimes_{w^*\tn{-max}}}
\newcommand{\oo}{\overline{\otimes}}
\begin{document}

\begin{abstract}
The category of $C^*$-algebras is blessed with many different tensor products. In contrast, virtually the only tensor product ever used in the category of von Neumann algebras is the normal spatial tensor product. We propose a definition of what a generic tensor product in this category should be. We call these {\it weak* tensor products}.

For von Neumann algebras $M$ and $N$, there are, in general, many choices of weak* tensor completions of the algebraic tensor product $M\odot N$. In fact, we demonstrate that $M$ has the property that $M\odot N$ has a unique weak* tensor product completion for every von Neumann algebra $N$ if and only if $M$ is completely atomic, i.e., is a direct product of type I factors. This in particular implies that even abelian von Neumann algebras need not have this property. As an application of the theory developed throughout the paper, we construct $2^{\mf c}$ nonequivalent weak* tensor product completions of $L^\infty(\R)\odot L^\infty(\R)$.

%For the class of von Neumann algebras, virtually the only of tensor product ever considered is the normal spatial tensor product $M\oo N$. In contrast, there are many...

%For von Neumann algebras $M$ and $N$, virtually the only tensor product every considered is the spatial normal tensor product $M\overline{\otimes}N$. In this paper, we study other reasonable notions of tensor products of von Neumann algebras, which we call {\it weak* tensor products}. 
\end{abstract}

\maketitle

\section{Introduction}

In the category of $C^*$-algebras, a tensor product $A\otimes_{\alpha} B$ of two $C^*$-algebras $A$ and $B$ is any completion of the algebraic tensor product $A\odot B$ with respect to a $C^*$-norm $\|\cdot\|_\alpha$. The two most natural choices of tensor products of $C^*$-algebras are the maximal tensor product $A\otimes_{\max} B$ and minimal tensor product $A\otimes_{\min} B$. True to their names, these are the ``largest'' and ``smallest'' tensor product of $A$ and $B$ in the sense that the identity map on $A\odot B$ extends to $C^*$-quotients
$$ A\otimes_{\max} B\to A\otimes_\alpha B\to A\otimes_{\min} B$$
for any other tensor product $A\otimes_\alpha B$ of $A$ and $B$.

Though the maximal and minimal tensor products are the most commonly studied tensor products in the category of $C^*$-algebras, they are by no means the only tensor products studied. For example, the binormal $C^*$-tensor product $M\otimes_{\mathrm{bin}} N$ of von Neumann algebras $M$ and $N$ is studied by Effros and Lance in \cite{el}. More recently, Ozawa and Pisier constructed a continuum of $C^*$-norms on $B(H)\odot B(H)$ (see \cite{op}), where $H$ is the infinite dimensional separable Hilbert space, and the author constructed a continuum of $C^*$-norms on algebraic tensor products of certain group $C^*$-algebras (see \cite{w}).

In contrast to $C^*$-algebras, virtually the only tensor product in the category of von Neumann algebras ever studied is the normal spatial tensor product.
%For von Neumann algebras $M\subset B(H)$ and $N\subset B(K)$, the normal spatial tensor product $M\oo N$ is defined to be the weak* closure of the inclusion $M\odot N\subset B(H\otimes K)$.
Further, there is not a concept of what other tensor products in this category should be. We propose that in the category of von Neumann algebras, a generic tensor product of von Neumann algebras $M$ and $N$ should be a von Neumann algebra $S$ which contains a weak* dense copy of the algebraic tensor product $M\odot N$ such that $M$ and $N$ are identifiable as von Neumann algebras with the copies of $M\otimes 1$ and $1\otimes N$ in $S$, respectively. We call such a von Neumann algebra $S$ a {\it weak* tensor product} of $M$ and $N$. In section 2 of this paper, we define this concept more rigorously and investigate weak* tensor products of factors.

Similar to the case of $C^*$-algebras, there is a ``largest'' weak* tensor product completion of $M\odot N$  for two von Neumann algebras $M$ and $N$. We briefly draw some connections between this ``largest'' weak*-tensor product and related structures in Section 3. Surprisingly, in general there is no ``smallest'' weak* tensor product despite the normal spatial tensor product being defined analogously to the minimal tensor product of $C^*$-algebras.

Tensor products play an invaluable role within the field of $C^*$-algebras and many properties of $C^*$-algebras (such as nuclearity (see \cite[Definition 11.4]{p}), exactness (see \cite[Chapter 17]{p}), and the WEP (see \cite[Proposition 15.3]{p})) are either defined or have a characterization in terms of tensor products. Perhaps weak* tensor products could play a similar role within von Neumann algebras in the future. Recall that a $C^*$-algebra $A$ is nuclear if and only if the algebraic tensor product $A\odot B$ has a unique $C^*$-completion for every second $C^*$-algebra $B$. In section 4, we completely characterize the analogous property for weak* tensor products. The class of nuclear $C^*$-algebras is large and contains many interesting examples. In contrast, a von Neumann algebra $M$ has the property that $M\odot N$ has a unique weak* tensor product completion for every von Neumann algebra $N$ if and only if $M$ is the direct product of type I factors. In particular, this implies that even abelian von Neumann algebras need not admit this property.

In the final section, we apply the theory developed throughout to studying weak* tensor product completions of $L^\infty(\R)\odot L^\infty(\R)$. In particular, we construct $2^{\mf c}$ nonequivalent weak* tensor product completions of $L^\infty(\R)\odot L^\infty(\R)$ and note that a generic weak* tensor product completion of $L^\infty(\R)\odot L^\infty(\R)$ need not have a separable predual, despite $L^\infty(\R)$ having a separable predual.

\section{Definitions and study of factors}\label{definitions section}

In this section we make precise the notion of weak* tensor products and examine particular examples involving factors. In particular, we show that if $M$ is a factor of type II or III, then $M\odot M'$ does not admit a unique weak* tensor product completion. The case when $M=B(H)$ for some Hilbert space $H$ is completely different, and we show that $B(H)$ has the property that $B(H)\odot N$ has a unique weak* tensor product completion for each von Neumann algebra $N$.

%\begin{defn}
%Let $M_1$ and $M_2$ be von Neumann algebras. A von Neumann algebra $N$ paired with binormal embeddings $i\fn M_1\to N$, $j\fn M_2\to N$ is said to be a {\it weak* tensor product} of $M_1$ and $M_2$ if
%\begin{itemize}
%\item $i(M_1)$ commutes with $j(M_2)$,
%\item The linear map $M_1\odot M_2\to A$ defined by $a\otimes b\mapsto i(a)j(b)$ is injective,
%\item The von Neumann algebra generated by $i(M_1)$ and $j(M_2)$ is $A$.
%\end{itemize}
%We may also refer to $N$ as a {\it weak* tensor completion} of $M_1\odot M_2$.
%\end{defn}

\begin{defn}
Let $M$ and $N$ be von Neumann algebras and suppose that $\alpha\fn M\odot N\to B(H)$ is an injective $*$-representation on some Hilbert space $H$ such that $\alpha|_{M}$ and $\alpha|_{N}$ are weak* homoemorphisms onto their respective ranges. (Here we identify $M$ and $N$ with $M\otimes 1$ and $1\otimes N$, respectively). The {\it weak* tensor product} $M\overline\otimes_\alpha N$ is defined to be the weak* closure of $\alpha(M\odot N)$ in $B(H)$. Such a weak* tensor product $M\oo_\alpha N$ is also called a {\it weak* tensor product completion} of $M\odot N$.
\end{defn}

Of course the normal spatial tensor product of von Neumann algebras is a particular example of a weak* tensor product. Indeed, if $M\subset B(H)$ and $N\subset B(K)$ are von Neumann algebras, then the canonical inclusion $\iota$ of $M\odot N$ into $B(H\otimes K)$ satisfies the conditions imposed by the above  definition and $M\overline{\otimes}N$ is the weak* closure of $\iota(M\odot N)$ in $B(H\otimes K)$.

The following example gives another construction of a weak* tensor product for factors.

\begin{example}\label{factor example}
Let $M\subset B(H)$ be a factor. It is an early result of Murray and von Neumann that the multiplication map $m\fn a\otimes b\mapsto ab$ from $M\odot M'\to B(H)$ is injective (see \cite{mv}). Observe that $a\in B(H)$ commutes with $M\cdot M'$ if and only if $a\in \C1$ as both $M$ and $M'$ are subsets of $M\cdot M'$. Hence, $M\overline\otimes_m M'=B(H)$.
\end{example}

Recall that if $M$ is factor, then $M\overline{\otimes}M'$ is a factor of the same type. Hence, if $M$ is a factor of type II or III and $m$ is as in Example \ref{factor example}, then $M\overline\otimes_m M'$ and $M\overline{\otimes}M'$ are not $*$-isomorphic. 
On the other hand, if $M=B(H)$ is a type I factor then $M\oo_m M'$ is trivially canonically $*$-isomorphic to $M\oo M'$ since $M'=\C1$. This motivates us to define a means of comparison for weak* tensor products.

\begin{defn}
Let $M$ and $N$ be von Neumann algebras. Two weak* tensor products $M\overline\otimes_\alpha N$ and $M\overline\otimes_\beta N$ of $M$ and $N$ are {\it equivalent} if the map $\alpha(a\otimes b)\mapsto \beta(a\otimes b)$ for $a\in M$, $b\in N$ extends to a (normal) $*$-isomorphism of $M\overline\otimes_\alpha N$ onto $M\overline\otimes_\beta N$. 
\end{defn}

%\begin{remark}
%Let $M\subset B(\Hi)$ and $N\subset B(\mc K)$ be von Neumann algebras. Then there is a one-to-one correspondence between quasiequivalence classes representations $\pi\fn M\otimes_{\max} N\to B(\Hi)$ which separate points of $M\odot N$ and for which the maps $\pi|_{M}$ and $\pi|_{N}$ are quasiequivalent to the identity maps $M\to B(\Hi)$ and $N\to B(\mc K)$, respectively. This correspondence is given by identifying 
%
%Indeed, let $M\otimes_\alpha N$. By the universal property of the maximal $C^*$-tensor product, $\alpha$ extends to a $*$-representation of $M\otimes_{\max} N$. Further, $M\otimes_\alpha N$ is the weak* closure of since the inclusion $M\odot N\hookrightarrow M\otimes_{\max} N$ is norm dense.
%\end{remark}

Let $A$ be a $*$-algebra and $\pi\fn A\to B(H)$, $\sigma\fn A\to B(K)$ be two $*$-representations. We recall that the map $\sigma(a)\mapsto \pi(a)$ extends to a normal $*$-homomorphism from $\pi(A)''$ to $\sigma(A)''$ if and only if $\pi$ is quasi-contained in $\sigma$, i.e., if and only if $\pi$ is unitarily equivalent to a subrepresentation of some amplification of $\sigma$. In particular, this immediately gives that two weak* tensor products $M\overline\otimes_\alpha N$ and $M\overline\otimes_\beta N$ are equivalent if and only if $\alpha$ and $\beta$ are quasi-equivalent (i.e., each is quasi-contained in the other) and, further, that the identity map on $M\odot N$ extends to a normal $*$-homomorphism from $M\overline\otimes_{\alpha} N$ to $M\overline\otimes_\beta N$ if and only if $\beta$ is quasi-equivalent to a subrepresentation of $\alpha$. This observation allows us to see that there is a universal weak* tensor product.

\begin{prop}\label{max}
Let $M$ and $N$ be von Neumann algebras. There exists a unique (up to equivalence) weak* tensor product $M\overline\otimes_\alpha N$ of $M$ and $N$ with the property that if $M\overline\otimes_\beta N$ is any other weak* tensor product, then the identity map on $M\odot N$ extends to a normal $*$-homomorphism from $M\overline\otimes_\alpha N$ onto $M\overline\otimes_\beta N$.
\end{prop}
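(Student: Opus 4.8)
The plan is to construct the universal weak* tensor product as a direct sum of a representative set of \emph{all} weak* tensor products, and to read off both existence and the stated universal property from the quasi-containment criterion recalled immediately above. Concretely, if I can produce a set $\{\beta_i : i\in I\}$ of weak* tensor products of $M$ and $N$ with the property that every weak* tensor product is equivalent to some $\beta_i$, then I will take $\alpha=\bigoplus_{i\in I}\beta_i$ acting on $\bigoplus_{i\in I}H_{\beta_i}$ and claim that $M\oo_\alpha N$ is the desired object.

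First I would check that $\alpha$ is itself a weak* tensor product. Injectivity of $\alpha$ is inherited from injectivity of any single summand, and $\alpha|_M=\bigoplus_i\beta_i|_M$ is a direct sum of normal $*$-isomorphisms, hence a normal injective $*$-homomorphism of $M$, which is automatically a weak* homeomorphism onto its (weak* closed) range; likewise for $\alpha|_N$. Thus $\alpha$ satisfies the hypotheses of the definition. For the universal property, let $M\oo_\beta N$ be an arbitrary weak* tensor product. By construction $\beta$ is equivalent, hence quasi-equivalent, to some $\beta_i$, and $\beta_i$ is literally a subrepresentation of $\alpha$; therefore $\beta$ is quasi-equivalent to a subrepresentation of $\alpha$. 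By the criterion recalled above, the identity on $M\odot N$ extends to a normal $*$-homomorphism $M\oo_\alpha N\to M\oo_\beta N$, and this map is onto because its image is a von Neumann subalgebra containing the weak* dense subalgebra $\beta(M\odot N)$. Uniqueness up to equivalence is then formal: if $\alpha'$ enjoys the same universal property, applying the property of each to the other produces normal $*$-homomorphisms in both directions restricting to the identity on $M\odot N$; since normal maps agreeing on the weak* dense set $M\odot N$ agree everywhere, these are mutually inverse, giving an equivalence.

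The main obstacle is purely set-theoretic: a priori the weak* tensor products of $M$ and $N$ form a proper class, so I must bound the size of the Hilbert spaces needed to realize them up to equivalence. The key observation is that for any weak* tensor product $S=M\oo_\beta N\subseteq B(H)$, every normal state of $S$ is uniquely determined by its restriction to the weak* dense subalgebra $\beta(M\odot N)$, so the normal states of $S$ inject into the set of positive functionals on the fixed $*$-algebra $M\odot N$; in particular there are at most $\kappa:=|(M\odot N)^{*}|$ of them. For each normal state $\omega$ the GNS space $H_\omega$ is cyclic and spanned by $\pi_\omega(\beta(M\odot N))\xi_\omega$, so $\dim H_\omega\le |M\odot N|$, and the direct sum of the $\pi_\omega$ over a separating family of normal states is a faithful normal representation of $S$ on a Hilbert space of dimension at most $\kappa\cdot|M\odot N|=\kappa$. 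Since any two faithful normal representations of a von Neumann algebra are quasi-equivalent, composing $\beta$ with this representation replaces it by an equivalent weak* tensor product acting on a Hilbert space of dimension at most $\kappa$. As Hilbert spaces of dimension at most $\kappa$ form a set up to isomorphism and the $*$-representations of $M\odot N$ on a fixed such space form a set, the weak* tensor products of $M$ and $N$ do fall into a set of equivalence classes, which is exactly the input needed to form $\alpha$. The one point requiring a little care is checking that passage to this faithful normal model genuinely yields an \emph{equivalent} weak* tensor product rather than merely a quasi-equivalent representation of $M\odot N$, but this is immediate from the recalled fact that equivalence of weak* tensor products coincides with quasi-equivalence of the underlying representations.
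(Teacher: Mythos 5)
Your proposal is correct and follows essentially the same route as the paper: both form the universal object as the direct sum $\alpha=\bigoplus_i\beta_i$ over a representative set of equivalence classes of weak* tensor products and deduce the universal property from the quasi-containment criterion recalled before the proposition. The only difference is that where the paper simply cites the standard fact that quasi-equivalence classes of representations of $M\odot N$ form a set, you supply a self-contained cardinality bound via normal states and the GNS construction (together with quasi-equivalence of faithful normal representations), which is a sound and somewhat more detailed justification of the same point.
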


\begin{proof}
The uniqueness of such a weak* tensor product is clear, so we focus on showing existence. Since the collection of all quasi-equivalence classes of representations of $M\odot N$ forms a set, the collection of equivalence classes of weak* tensor products of $M$ and $N$ must form a set. Let $\{M\overline\otimes_{\alpha_i}N : i\in I\}$ be the set formed by choosing one representative from each equivalence class, and define $\alpha=\bigoplus_{i\in I} \alpha_i$. Then, since $\alpha_i$ is a subrepresentation of $\alpha$ for every $i\in I$, it is clear that $M\overline\otimes_\alpha N$ has the desired universal property among weak* tensor products of $M$ and $N$.
\end{proof}

\begin{defn}
Let $M$ and $N$ be von Neumann algebras. The {\it maximal} (or {\it universal}) {\it weak* tensor product} of $M$ and $N$ is the weak* tensor product $M\overline\otimes_\alpha N$ of $M$ and $N$ with universal property described in Proposition \ref{max}. This weak* tensor product is denoted $M\wm N$.
\end{defn}

%This maximal weak* tensor product is studied in more detail during the following section.

It is readily verified that this weak* tensor product is both commutative and associative. It is also easily checked that this weak* tensor product has the projectivity property since if $K$ is any weak* closed ideal of a von Neumann algebra $M$, then $M=N\oplus K$ where $N=M/K$.

We find it interesting to observe that if $M\subset B(H)$ is a factor of type II or III and $m\fn M\odot M'\to B(H)$ is the multiplication map, then $M\wm M'$ is not a factor since both $B(H)=M\oo_m M'$ and $M\oo M'$ are normal quotients of $M\wm M'$.

We find it useful to think of the normal spatial and maximal weak* tensor products as being analogues of the minimal and maximal tensor products of $C^*$-algebras. Recall that the minimal tensor product $A\otimes_{\min} B$ of $C^*$-algebras $A$ and $B$ has the property that if $A\otimes_{\alpha} B$ is any other $C^*$-completion of $A\odot B$, then the identity map on $A\odot B$ extends to a $*$-homomorphism from $A\otimes_{\alpha} B\to A\otimes_{\min} B$.

\begin{remark}
The examples we have already studied show that the analogue of this property does not hold for the normal spatial tensor product among the class of weak* tensor products.
Indeed, let $M\subset B(H)$ be a factor of type II or III and $m\fn M\odot M'\to B(H)$ the multiplication operator. Then $M\oo_{m} M'=B(H)$ is not a normal quotient of $M\oo M'$ since $B(H)$ is a type I factor but $M\oo M'$ is a factor not of type I. However, it is interesting to note that both $M\oo M'$ and $M\oo_m M$, being factors, are each minimal among the weak* tensor products of $M$ and $M'$.
\end{remark}

We finish this section with a brief discussion of an analogue of nuclearity for $C^*$-algebras phrased in terms of weak* tensor products. Typically the injectivity of von Neumann algebras is thought of as being the proper analogue of nuclearity of $C^*$-algebras. We will later see that our following definition is a much stronger condition than injectivity and does not even include the class of abelian von Neumann algebras.

\begin{defn}
A von Neumann algebra $M$ has the {\it weak* tensor uniqueness property} (or {\it WTU property}) if for any von Neumann algebra $N$, any two weak* tensor products $M\overline\otimes_\alpha N$ and $M\overline\otimes_\beta N$ of $M$ and $N$ are equivalent.
\end{defn}

\begin{prop}\label{type I}
Let $H$ be a Hilbert space. Then $B(H)$ has the WTU property.
\end{prop}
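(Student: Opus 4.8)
The plan is to show that \emph{every} weak* tensor product of $B(H)$ and $N$ is equivalent to the normal spatial tensor product $B(H)\oo N$; since equivalence is symmetric and transitive, this immediately yields the WTU property. So fix a weak* tensor product $B(H)\oo_\alpha N$, where $\alpha\fn B(H)\odot N\to B(\Hi)$ is the defining representation on a Hilbert space $\Hi$. The projection $p=\alpha(1\otimes 1)$ is the common unit of the ranges of $\alpha|_{B(H)}$ and $\alpha|_N$, and $\alpha(B(H)\odot N)\subseteq pB(\Hi)p$; replacing $\Hi$ by $p\Hi$ we may assume $\alpha$ is unital without affecting the weak* closure.

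The key structural input is the classification of normal unital representations of a type I factor: since $\alpha|_{B(H)}$ is a weak* homeomorphism (hence a normal unital $*$-isomorphism) of $B(H)$ onto its range, there is a Hilbert space $K$ and a unitary $U\fn\Hi\to H\otimes K$ with $U\alpha(a\otimes 1)U^*=a\otimes 1_K$ for every $a\in B(H)$; that is, $\alpha|_{B(H)}$ is unitarily equivalent to an amplification of the identity representation. After conjugating by $U$ we may therefore assume $\Hi=H\otimes K$ and $\alpha(a\otimes 1)=a\otimes 1_K$.

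Now for each $b\in N$ the operator $\alpha(1\otimes b)$ commutes with $\alpha(a\otimes 1)=a\otimes 1_K$ for all $a\in B(H)$, so it lies in the commutant $(B(H)\otimes 1_K)'=1_H\otimes B(K)$. Hence there is a map $\beta_0\fn N\to B(K)$ with $\alpha(1\otimes b)=1_H\otimes\beta_0(b)$, and since $\alpha|_N$ is a weak* homeomorphism onto its range, $\beta_0$ is a normal $*$-isomorphism of $N$ onto a von Neumann subalgebra $\beta_0(N)\subseteq B(K)$. Consequently $\alpha(a\otimes b)=(a\otimes 1_K)(1_H\otimes\beta_0(b))=a\otimes\beta_0(b)$, so $B(H)\oo_\alpha N$ is exactly the weak* closure of $B(H)\odot\beta_0(N)$ in $B(H\otimes K)$, i.e.\ the normal spatial tensor product $B(H)\oo\beta_0(N)$. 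Because the normal spatial tensor product depends only on the isomorphism classes of its factors, the map $a\otimes\beta_0(b)\mapsto a\otimes b$ extends to a normal $*$-isomorphism $B(H)\oo\beta_0(N)\to B(H)\oo N$, exhibiting the required equivalence.

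The main obstacle is the structural step in the second paragraph — the fact that a normal unital representation of $B(H)$ is an amplification of the identity — together with the commutant identity $(B(H)\otimes 1_K)'=1_H\otimes B(K)$; once these are in hand the remainder is bookkeeping. One should also take a little care to verify that $\beta_0$ inherits normality and injectivity from $\alpha|_N$, and that the spatial tensor product is genuinely independent of the chosen normal representation $\beta_0$ of $N$.
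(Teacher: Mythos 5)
Your proof is correct and follows essentially the same route as the paper's: both identify $\alpha|_{B(H)}$, via the classification of normal unital representations of a type I factor, with an amplification $a\mapsto a\otimes 1_K$, then use the commutant identity $(B(H)\otimes 1_K)'=1_H\otimes B(K)$ to place $\alpha(1\otimes N)$ inside $1_H\otimes B(K)$ and conclude equivalence with the spatial tensor product. Your version merely fills in details the paper leaves implicit (the reduction to the unital case and the verification that $a\otimes\beta_0(b)\mapsto a\otimes b$ extends to a normal $*$-isomorphism), which is careful but not a different argument.
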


\begin{proof}
Let $N$ be a von Neumann algebra and suppose that $B(H)\overline\otimes_\alpha N\subset B(K)$ is a weak* tensor product of $B(H)$ and $N$. Then $\alpha|_{B(H)}$ is unitarily equivalent to an amplification map of $B(H)$. So we may assume that $K=H\otimes\ell^2(I)$ for some index set $I$ and
$$\alpha(a\otimes 1)=a\otimes 1\in B(H)\overline{\otimes}B(\ell^2(I))=B(K)$$
for every $a\in B(H)$. Since $\alpha(1\otimes N)$ commutes with $\alpha(B(H)\otimes 1)=B(H)\otimes 1$, we have that $\alpha(1\otimes N)\subset 1\otimes B(\ell^2(I))$. It follows that $B(H)\overline\otimes_\alpha N$ is equivalent to the normal spatial tensor product $B(H)\overline\otimes N$.
\end{proof}

\section{Remarks on the maximal weak* tensor product}

Before continuing onto the main results of this paper, we pause to record a couple connections between the maximal weak* tensor product and related constructions. We begin by establishing a connection with the maximal tensor product of $C^*$-algebras.

\begin{proposition}
Let $A$ and $B$ be $C^*$-algebras. Then the identity map on $A\odot B$ extends to a normal $*$-isomorphism $(A\otimes_{\max} B)^{**}\cong A^{**}\wm B^{**}$.
\end{proposition}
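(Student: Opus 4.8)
The plan is to realize $(A\otimes_{\max}B)^{**}$ as a weak* tensor product of $A^{**}$ and $B^{**}$ and then verify that it satisfies the universal property of Proposition \ref{max}. I would work from two standard facts. First, the defining universal property of the maximal $C^*$-tensor product: representations of $A\otimes_{\max}B$ correspond bijectively to pairs $(\pi,\rho)$ of commuting representations $\pi\fn A\to B(K)$, $\rho\fn B\to B(K)$ via $\pi(a)\rho(b)\leftrightarrow a\otimes b$. Second, every representation of a $C^*$-algebra $C$ extends uniquely to a normal representation of $C^{**}$ generating the same von Neumann algebra; in particular each representation of $A$ extends uniquely to a normal representation of $A^{**}$, and this extension is faithful precisely when the original representation has central cover $1$.

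Next I would let $\rho_u$ be the universal representation of $A\otimes_{\max}B$, so that $(A\otimes_{\max}B)^{**}=\rho_u(A\otimes_{\max}B)''$, and write $\pi_u=\rho_u|_A$, $\sigma_u=\rho_u|_B$ for the associated commuting pair. The first substantive step is to check that $\pi_u$ and $\sigma_u$ have central cover $1$. For this I would observe that every state $f$ of $A$ arises (via the appropriate restriction) from a product state $f\otimes g$ of $A\otimes_{\max}B$, for any state $g$ of $B$, so every GNS representation of $A$ is quasi-contained in $\pi_u$; summing over all states shows the universal representation of $A$ is quasi-contained in $\pi_u$, i.e.\ $c(\pi_u)=1$, and symmetrically $c(\sigma_u)=1$. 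Hence the normal extensions $\widetilde\pi_u\fn A^{**}\to B(H_u)$ and $\widetilde\sigma_u\fn B^{**}\to B(H_u)$ are faithful, normal, and have commuting ranges, so $\alpha_u(x\otimes y)=\widetilde\pi_u(x)\widetilde\sigma_u(y)$ satisfies the two restriction conditions in the definition of a weak* tensor product, and $\alpha_u(A^{**}\odot B^{**})''=(A\otimes_{\max}B)^{**}$.

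The remaining point needed to conclude that $\alpha_u$ is a genuine weak* tensor product is injectivity of $\alpha_u$ on $A^{**}\odot B^{**}$, and this is the step I expect to be most delicate: commuting faithful normal copies of $A^{**}$ and $B^{**}$ need not give an injective multiplication map in general (overlapping diagonal copies already fail), so injectivity must be argued, not assumed. I would obtain it by comparison with the spatial tensor product. The spatial inclusion $a\otimes b\mapsto a\otimes b$ is a representation of $A\otimes_{\min}B$, hence of $A\otimes_{\max}B$, and so extends to a normal $*$-homomorphism $(A\otimes_{\max}B)^{**}\to A^{**}\oo B^{**}$ whose composition with $\alpha_u$ is the canonical embedding of $A^{**}\odot B^{**}$ into $A^{**}\oo B^{**}$. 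Since that embedding is injective, so is $\alpha_u$, and $\alpha_u$ is therefore a bona fide weak* tensor product.

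Finally I would verify the universal property. Given any weak* tensor product $A^{**}\oo_\beta B^{**}$, restricting $\beta$ to $A$ and $B$ gives a commuting pair of representations, hence a representation $\phi$ of $A\otimes_{\max}B$ with $\phi(a\otimes b)=\beta(a\otimes b)$ on $A\odot B$. Extending $\phi$ to a normal representation of $(A\otimes_{\max}B)^{**}$ produces a normal $*$-homomorphism from $(A\otimes_{\max}B)^{**}=A^{**}\oo_{\alpha_u}B^{**}$ onto $A^{**}\oo_\beta B^{**}$ extending the identity on $A\odot B$. Thus $\alpha_u$ enjoys the universal property of Proposition \ref{max}, and by the uniqueness asserted there, $(A\otimes_{\max}B)^{**}\cong A^{**}\wm B^{**}$ via the identity on $A\odot B$.
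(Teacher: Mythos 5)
Your argument is correct, and it is best described as a reorganization and tightening of the paper's proof rather than a different method. The paper produces two normal $*$-homomorphisms extending the identity on $A\odot B$ --- one from $(A\otimes_{\max}B)^{**}$ to $A^{**}\wm B^{**}$ (because the representation defining $A^{**}\wm B^{**}$ is continuous for the max norm) and one back, obtained exactly as in your second step from the commuting normal bidual extensions of the universal representation --- and composes them; you instead realize $(A\otimes_{\max}B)^{**}$ directly as a weak* tensor product $A^{**}\oo_{\alpha_u}B^{**}$, verify the universal property of Proposition~\ref{max} against an arbitrary $\beta$, and invoke the uniqueness asserted there. The substantive gain in your version is that you prove two facts the paper leaves implicit: faithfulness of the extensions $\widetilde\pi_u$, $\widetilde\sigma_u$ (central cover $1$, via product states), which is what makes the restrictions of $\alpha_u$ to $A^{**}$ and $B^{**}$ weak* homeomorphisms onto their ranges, and injectivity of $\alpha_u$ on $A^{**}\odot B^{**}$, obtained by composing with the normal extension of the spatial representation. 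Both are genuinely required: the definition of a weak* tensor product demands an injective representation with weak*-homeomorphic restrictions, and the paper's map $A^{**}\wm B^{**}\to(A\otimes_{\max}B)^{**}$ is only licensed by the universal property once one knows the representation built from $\pi_u$ is (or is quasi-contained in) a weak* tensor product representation --- precisely what your two checks certify.

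Two points need patching, both routine. First, in your final step the universal property of Proposition~\ref{max} requires the normal homomorphism to extend the identity on $A^{**}\odot B^{**}$, not merely on $A\odot B$. This follows because $\widetilde\phi\circ\alpha_u$ and $\beta$ are normal in each variable separately and agree on $A\odot B$: fixing $b\in B$, the normal maps $x\mapsto\widetilde\phi(\alpha_u(x\otimes b))$ and $x\mapsto\beta(x\otimes b)$ agree on the weak* dense subalgebra $A$ of $A^{**}$, hence everywhere, and then one repeats in the second variable. The same one-line argument is needed to justify your assertion, stated without proof, that the composition of the normal extension of the spatial representation with $\alpha_u$ is the canonical embedding of $A^{**}\odot B^{**}$ into $A^{**}\oo B^{**}$. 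Second, you do not address non-unital $A$ and $B$, which the statement allows and the paper handles by a separate unitization step: for non-unital algebras the correspondence between representations of $A\otimes_{\max}B$ and commuting pairs holds for nondegenerate representations, and the restrictions $\rho_u|_A$, $\rho_u|_B$ must be interpreted through the multiplier algebra; alternatively one reduces to the unital case via unitizations as the paper does. Neither repair affects the structure of your argument.
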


\begin{proof}
We first assume that $A$ and $B$ are unital.

Let $\alpha$ be a $*$-representation of $A^{**}\odot B^{**}$  such that $A^{**}\oo_\alpha B^{**}= A^{**}\wm B$. Then, since $a\otimes b\mapsto \alpha(a\otimes b)$ extends to a $*$-representation from $A\otimes_{\max} B\to A^{**}\wm B^{**}$, we have that the identity map on $A\odot B$ extends to a normal $*$-homormophism from $(A\otimes_{\max} B)^{**}\to A^{**}\wm B^{**}$.

Now let $\pi_u\fn A\odot B\to B(H_u)$ be the universal representation of $A\otimes_{\max} B$. Then $\pi_u|_A$ extends to a normal $*$-homomorphism $A^{**}\to B(H_u)$ and $\pi_u|_B$ extends to a normal $*$-homomorphism $B^{**}\to B(H_u)$. As the ranges of $\pi_u|_{A}$ and $\pi_u|_{B}$ commute, the identity map on $A\odot B$ extends to a normal $*$-homomorphism $A^{**}\wm B^{**}\to (A\otimes_{\max} B)^{**}$. Hence, the identity map on $A\odot B$ extends to a normal $*$-isomorphism $(A\otimes_{\max} B)^{**}\cong A^{**}\wm B^{**}$.

In the case that $A$ and $B$ are not necessarily unital, then the canonical embedding of $A^{**}\odot B^{**}$ into $A_I^{**}\odot B_I^{**}$ (where $A_I$ and $B_I$ are the unitizations of $A$ and $B$) is readily verified to extend to a binormal embedding of $A^{**}\wm B^{**}$ into $A_I^{**}\wm B_I^{**}$ since every $*$-representation of $A\odot B$ canonically extends to a $*$-representation of $A_I\odot B_I$. Therefore the idenitity map on $A\odot B$ extends to a normal $*$-isomorphism $(A\otimes_{\max} B)^{**}\cong A^{**}\wm B^{**}$ for any $C^*$-algebras $A$ and $B$.
\end{proof}

Let $M$ and $N$ be von Neumann algebras. The binormal $C^*$-norm $\|\cdot\|_{\mathrm{bin}}$ of $M\odot N$ is defined by
$$\left\|x\right\|_{\mathrm{bin}}=\sup\{\varphi(x^*x)^{1/2} : \varphi\in S(M\odot N) \tn{ and } (a,b)\mapsto \varphi(a\otimes b)\tn{ is separately weak* continuous}\}$$
where $S(M\odot N)$ is the set of states on $M\odot N$, i.e. the set of linear maps $\varphi\fn M\odot N\to \C$ such that $\varphi(1)=1$ and $\varphi(x^*x)\geq 0$ for every $x\in M\odot N$.
This $C^*$-tensor norm was defined and studied by Effros and Lance in \cite{el}. The main result of their paper on this norm is that a von Neumann algebra $M$ has the property that the binormal $C^*$-norm $\|\cdot\|_{\mathrm{bin}}$ agrees with the minimal $C^*$-norm $\|\cdot\|_{\min}$ on $M\odot N$ for all choices of von Neumann algebras $N$ if and only if $M$ is semidiscrete (see \cite[Theorem 4.1]{el}). We will next observe observe that the norm on $M\odot N$ arising from the inclusion in $M\wm N$ is exactly the binormal $C^*$-norm.

\begin{prop}
Let $M$ and $N$ be von Neumann algebras and $\|\cdot\|_{w^*-\tn{max}}$ denote the norm on $M\odot N$ arising from the inclusion into $M\wm N$. Then $\|\cdot\|_{w^*-\tn{max}}=\|\cdot\|_{\tn{bin}}$.
\end{prop}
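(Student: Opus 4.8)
The plan is to prove the two inequalities $\|\cdot\|_{\tn{bin}}\leq \|\cdot\|_{w^*-\tn{max}}$ and $\|\cdot\|_{w^*-\tn{max}}\leq\|\cdot\|_{\tn{bin}}$ separately. First I would record the reformulation
$$\|x\|_{w^*-\tn{max}}=\sup\{\|\alpha(x)\| : \alpha\tn{ defines a weak* tensor product of }M\tn{ and }N\},$$
which follows from the universal property of Proposition \ref{max} together with the observation (made in the excerpt) that quasi-equivalent representations induce the same $C^*$-norm, so that the norm of $\alpha=\bigoplus_i\alpha_i$ on $M\odot N$ is the supremum of the norms induced by \emph{all} weak* tensor product representations.

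For the easy inequality $\|\cdot\|_{w^*-\tn{max}}\leq\|\cdot\|_{\tn{bin}}$, I would fix a weak* tensor product representation $\alpha\fn M\odot N\to B(H)$ and a unit vector $\xi\in H$, and consider the state $\varphi_\xi=\langle\alpha(\cdot)\xi,\xi\rangle$ on $M\odot N$. This state is binormal: for fixed $b\in N$ the map $a\mapsto\varphi_\xi(a\otimes b)=\langle\alpha(a\otimes 1)\alpha(1\otimes b)\xi,\xi\rangle$ is weak* continuous because $\alpha|_M$ is a normal representation of $M$, and symmetrically in $b$. Hence $\varphi_\xi(x^*x)^{1/2}\leq\|x\|_{\tn{bin}}$, and since $\alpha(x^*x)$ is positive, $\|\alpha(x)\|^2=\|\alpha(x^*x)\|=\sup_{\|\xi\|=1}\varphi_\xi(x^*x)\leq\|x\|_{\tn{bin}}^2$. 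Taking the supremum over all $\alpha$ gives the inequality.

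For the reverse inequality I would start from a binormal state $\varphi$ and manufacture a genuine weak* tensor product. Let $(\pi_\varphi,H_\varphi,\xi_\varphi)$ be its GNS triple and put $\pi(a)=\pi_\varphi(a\otimes 1)$, $\sigma(b)=\pi_\varphi(1\otimes b)$, commuting $*$-representations of $M$ and $N$. The crucial step is that $\pi$ and $\sigma$ are normal. For $\eta$ in the dense subspace $\pi_\varphi(M\odot N)\xi_\varphi$, a direct computation writes $a\mapsto\langle\pi(a)\eta,\eta\rangle$ as a finite sum of maps $a\mapsto\varphi(a_k^*aa_j\otimes b_k^*b_j)$, each weak* continuous by the separate weak* continuity of $\varphi$; thus $\omega_\eta\circ\pi$ is normal for all such $\eta$. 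Since the normal functionals form a norm-closed subspace of $\pi(M)_*$ and $\eta\mapsto\omega_\eta$ is norm-continuous, $\omega_\eta\circ\pi$ is normal for every $\eta\in H_\varphi$, so $\pi$ is normal; likewise $\sigma$.

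Finally, $\pi_\varphi$ need be neither injective nor a weak* homeomorphism on $M$, so it need not itself satisfy the definition of a weak* tensor product. To repair this I would set $\tilde\alpha=\pi_\varphi\oplus\iota$, where $\iota$ is the representation defining the normal spatial tensor product $M\oo N$. Then $\tilde\alpha$ is injective and $\tilde\alpha|_M,\tilde\alpha|_N$ are normal and injective (injectivity supplied by the $\iota$ summand), hence weak* homeomorphisms onto their ranges, so $\tilde\alpha$ defines a weak* tensor product. Consequently $\varphi(x^*x)^{1/2}=\|\pi_\varphi(x)\xi_\varphi\|\leq\|\pi_\varphi(x)\|\leq\|\tilde\alpha(x)\|\leq\|x\|_{w^*-\tn{max}}$, and taking the supremum over binormal states $\varphi$ gives $\|x\|_{\tn{bin}}\leq\|x\|_{w^*-\tn{max}}$. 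I expect the main obstacle to be the normality of the GNS restrictions $\pi$ and $\sigma$: the passage from weak* continuity of the diagonal vector states on a dense subspace to normality of the entire representation is where the argument has real content, the $\oplus\,\iota$ trick being only a formal device to meet the injectivity clause in the definition.
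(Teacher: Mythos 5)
Your proof is correct and follows essentially the same route as the paper: one direction via the observation that vector states of a weak* tensor product representation are separately weak* continuous, and the other via the GNS construction of a binormal state together with the same matrix-coefficient computation showing the restrictions to $M$ and $N$ are normal. Your explicit $\tilde\alpha=\pi_\varphi\oplus\iota$ repair of the injectivity/weak*-homeomorphism requirement is a detail the paper's proof leaves implicit, so you are if anything slightly more careful than the original.
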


\begin{proof}
Let $\varphi\in S(M\odot N)$ and suppose that $\varphi|_{M}$ and $\varphi|_{N}$ are weak* continuous. Denote the GNS representation of $\varphi$ by $\pi_\varphi\fn M\odot N\to B(H_\varphi)$. We claim that the restrictions $\pi_\varphi|_{M}$ and $\pi_\varphi|_{N}$ define normal maps on $M$ and $N$, respectively. Indeed, let $x=\sum_{j=1}^n a_i\otimes b_i$ and $y=\sum_{k=1}^m a_i'\otimes b_i'$ be elements of $M\odot N$. Denoting the images of $x$ and $y$ in $H_\varphi$ by $\Lambda(x)$ and $\Lambda(y)$, respectively, we then have that the map
$$M\times N\ni (a,b)\mapsto \lla \pi_\varphi(a\otimes b) \Lambda(x),\Lambda(y)\rra=\sum_{j,k} \varphi( ((a_k')^*aa_j)\otimes ((b_k')^*bb_j))$$
is separately weak* continuous. Since $\Lambda(M\odot N)$ is dense in $H_\varphi$, it follows that $\pi_\varphi|_M$ is WOT-WOT continuous on the unit ball of $M$. Therefore $\pi_\varphi|_{M}$ is normal and, similarly, $\pi_\varphi|_{N}$ is also normal. So $\|\cdot\|_{\tn{bin}}\leq \|\cdot\|_{w^*-\tn{max}}$.

Now let $\alpha\fn M\odot N\to B(H)$ be a $*$-representation which satisfies the conditions required for a weak* tensor product. It is clear that $(a,b)\mapsto \lla \alpha(a\otimes b)\xi,\eta\rra$ is separately weak* continuous for all $\xi,\eta\in H$ and, so, $\|\cdot\|_{w^*-\tn{max}}\leq \|\cdot\|_{\tn{bin}}$. 
\end{proof}

\section{Characterization of the weak* tensor uniqueness property}\label{WTU section}

In this section we study the WTU property and give a complete characterization of the von Neumann algebras with this property. We have already seen in Section \ref{definitions section} that a factor $M$ has the WTU property if and only if $M$ is of type I. We will show in this section that a von Neumann algebra $M$ has the WTU property if and only if $M$ is a direct product of type I factors.

\begin{lemma}\label{products}
Let $\{M_i : i\in I\}$ be a set of von Neumann algebras. Then $M:=\prod_{i\in I}M_i$ has the WTU property if and only if $M_i$ has the WTU property for each $i\in I$.
\end{lemma}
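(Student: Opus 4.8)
The plan is to compare weak* tensor products of the product $M:=\prod_{i\in I}M_i$ with those of the individual factors through the central projections $p_i\in M$ given by the units of the summands $M_i$. These are mutually orthogonal central projections with $\sum_i p_i=1$ weak*ly, so each $p_i\otimes 1$ is central in $M\odot N$; consequently, for any representation $\alpha\fn M\odot N\to B(\Hi)$ defining a weak* tensor product, $z_i:=\alpha(p_i\otimes 1)$ is a central projection of $M\oo_\alpha N$, and normality of $\alpha|_M$ gives $\sum_i z_i=1$ weak*ly. Thus $M\oo_\alpha N=\prod_i z_i(M\oo_\alpha N)$ is a von Neumann-algebraic direct product, and one computes $z_i(M\oo_\alpha N)=\overline{\alpha(M_i\odot N)}^{w^*}$. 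This central decomposition is the backbone of both implications.

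For the forward implication (if $M$ has the WTU property then each $M_i$ does), I would argue directly. Fix $i_0$, an arbitrary von Neumann algebra $N$, and two weak* tensor products $\alpha,\beta$ of $M_{i_0}$ and $N$. Inflate them to representations $\tilde\alpha=\alpha\oplus\pi$ and $\tilde\beta=\beta\oplus\pi$ of $M\odot N=(M_{i_0}\odot N)\oplus\bigl((\prod_{j\ne i_0}M_j)\odot N\bigr)$, where $\pi$ is a fixed weak* tensor product (say the normal spatial one) of $\prod_{j\ne i_0}M_j$ with $N$. Injectivity and normality of the restrictions pass through the direct sum, so $\tilde\alpha,\tilde\beta$ are weak* tensor products of $M$ and $N$; by the WTU property of $M$ they are equivalent. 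Compressing the equivalence by the central projection $\tilde\alpha(p_{i_0}\otimes 1)$, whose corner is canonically $M_{i_0}\oo_\alpha N$ with the correct embeddings of $M_{i_0}$ and $N$, recovers an equivalence of $M_{i_0}\oo_\alpha N$ and $M_{i_0}\oo_\beta N$. Hence $M_{i_0}$ has the WTU property.

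For the reverse implication (if every $M_i$ has the WTU property then so does $M$), I would take two weak* tensor products $\alpha,\beta$ of $M$ and $N$ and show the compressions $\alpha_i:=z_i\,\alpha(\cdot)$ and $\beta_i:=w_i\,\beta(\cdot)$ (with $w_i=\beta(p_i\otimes 1)$), each restricted to $M_i\odot N$, are themselves weak* tensor products of $M_i$ and $N$. The restriction $\alpha_i|_{M_i}$ equals $\alpha|_{M_i}$, a normal injection, hence a weak* homeomorphism onto its range. The delicate point, and the main obstacle, is injectivity of $\alpha_i|_N$, that is of $b\mapsto z_i\alpha(1\otimes b)=\alpha(p_i\otimes b)$: a priori, cutting the copy of $N$ by the central projection $z_i$ could annihilate some $b\ne0$. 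It cannot, because $p_i\ne 0$ forces $p_i\otimes b\ne 0$ in $M_i\odot N$ whenever $b\ne 0$, and injectivity of $\alpha$ on all of $M\odot N$ then gives $\alpha(p_i\otimes b)\ne 0$; thus $\alpha_i|_N$ is an injective normal $*$-homomorphism, hence a weak* homeomorphism onto its range. With each $\alpha_i,\beta_i$ a genuine weak* tensor product of $M_i$ and $N$, the WTU property of $M_i$ supplies equivalences $\Phi_i\fn z_i(M\oo_\alpha N)\to w_i(M\oo_\beta N)$ with $\Phi_i(\alpha(a\otimes b))=\beta(a\otimes b)$ for $a\in M_i$, $b\in N$. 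Assembling $\Phi:=\prod_i\Phi_i$ across the central decomposition yields a normal $*$-isomorphism $M\oo_\alpha N\to M\oo_\beta N$, and since $\alpha(a\otimes b)$ is the weak* limit of the partial sums $\sum_{i\in F}\alpha(p_ia\otimes b)$, normality of $\Phi$ gives $\Phi(\alpha(a\otimes b))=\beta(a\otimes b)$ for all $a\in M$, $b\in N$. Thus $\alpha$ and $\beta$ are equivalent, and once the injectivity point above is settled the remaining steps are routine bookkeeping with the central decomposition and separate weak* continuity of multiplication.
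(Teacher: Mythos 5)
Your proof is correct and takes essentially the same route as the paper's: the reverse direction decomposes an arbitrary weak* tensor product of $\prod_{i\in I}M_i$ with $N$ along the central projections $\alpha(p_i\otimes 1)$ and applies the WTU property componentwise, while the forward direction realizes weak* tensor products of the product algebra as direct sums with a fixed completion on the complementary summand (the paper phrases this contrapositively, via quasi-equivalence of direct sums, rather than by compressing a given equivalence). Your explicit verification that $b\mapsto\alpha(p_i\otimes b)$ is injective --- via injectivity of $\alpha$ on the nonzero elementary tensor $p_i\otimes b$ --- settles a point the paper's proof passes over silently, but the argument is the same in substance.
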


\begin{proof}
We first suppose that there exists an index $j\in I$ so that $M_{j}$ fails to have the WTU property. Then there exist two inequivalent weak* tensor products $M_{j}\overline\otimes_\alpha N$ and $M_{j}\overline\otimes_\beta N$ for some von Neumann algebra $N$. Let $M_i\overline\otimes_{\alpha_i} N$ and $M_i\overline\otimes_{\beta_i} N$ for $i\in I$ be arbitrary weak* tensor products such that $\alpha_j=\alpha$ and $\beta_j=\beta$. Then $M\overline\otimes_{\bigoplus_i\alpha_i} N$ and $M\overline\otimes_{\bigoplus\beta_i} N$ are inequivalent weak* tensor products since $\alpha$ is not quasiequivalent to $\beta$ implies that $\bigoplus_i \alpha_i$ is not quasiequivalent to $\bigoplus_i \beta_i$.

Next, for each index $j$ let $e_j\fn \prod_{i\in I} M_i\to M_j$ be the restriction to the $j$th component and suppose that $M_i$ has the WTU property for every $i\in I$. Let $N$ be an arbitrary von Neumann algebra and $\prod_{i\in I}(M_i)\overline\otimes_\alpha N$ a weak* tensor product of $\prod_i M_i$ and $N$. Then $e_j\otimes 1\in (M\overline\otimes_\alpha N)'$ since $e_j\otimes 1$ commutes with $a\otimes b$ for all $a\in M$, $b\in N$ and $M\odot N$ is weak* dense in $M\overline\otimes_\alpha N$. Since $\sum_{i\in I} e_i\otimes 1=1$, it follows that
$$M\overline\otimes_\alpha N=\prod_{i\in I} (e_i\otimes 1)M\overline\otimes_\alpha N=\prod_{i\in I} M_i\overline\otimes_{\alpha_i} N$$
where $\alpha_i$ denotes $\alpha|_{M_i\odot N}$. But $M_i\overline\otimes_{\alpha_i} N=M\overline{\otimes}N$ for each $i\in I$ by the WTU property. Therefore $M\overline\otimes_\alpha N=\prod_{i\in I} M_i\overline{\otimes}N$ and, hence, we conclude that $M$ has the WTU property. 
\end{proof}

Choi and Effros showed in \cite[Lemma 2.1]{el} that a von Neumann algebra $M\subset B(H)$ is injective if and only if the multiplication map $m\fn M\odot M'\to B(H)$ is continuous with respect to the minimal tensor product norm. It is interesting to see in the following theorem that an analogous characterization of the WTU property also holds.

\begin{theorem}
The following are equivalent for a von Neumann algebra $M\subset B(H)$.
\begin{itemize}
\item[(i)] $M$ has the WTU property;
\item[(ii)] The weak* tensor products $M\overline{\otimes}M'$ and $M\wm M'$ are equivalent;
\item[(iii)] The multiplication map $m\fn a\otimes b\mapsto ab$ extends to a normal $*$-homomorphism from $M\oo M'$ to $B(H)$;
\item[(iv)] $M$ is of the form $\prod_{i\in I} B(H_i)$ for some choices of Hilbert spaces $H_i$.
\end{itemize}
\end{theorem}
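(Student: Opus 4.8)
The plan is to prove the theorem as a cycle of implications, exploiting results already established. The overall structure I would use is $(iv)\Rightarrow(i)\Rightarrow(ii)\Rightarrow(iii)\Rightarrow(iv)$, since several of these links are either immediate or reduce to earlier work. First, $(iv)\Rightarrow(i)$ is essentially done: Proposition \ref{type I} shows each $B(H_i)$ has the WTU property, and Lemma \ref{products} shows the WTU property passes to arbitrary direct products, so $\prod_{i\in I}B(H_i)$ has the WTU property. Next, $(i)\Rightarrow(ii)$ is trivial, since $(ii)$ asserts equivalence of two particular weak* tensor products (the spatial one and the maximal one) for the specific choice $N=M'$, which is a special case of the defining property in $(i)$.

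The link $(ii)\Leftrightarrow(iii)$ I would handle using the discussion preceding Proposition \ref{max}. Recall that $M\wm M'$ is the maximal weak* tensor product, so the identity map on $M\odot M'$ always extends to a normal $*$-homomorphism from $M\wm M'$ onto any other weak* tensor product, in particular onto both $M\oo M'$ and onto $M\oo_m M'=B(H)$ from Example \ref{factor example}. Equivalence of $M\oo M'$ and $M\wm M'$ means the identity on $M\odot M'$ extends to a normal $*$-isomorphism $M\wm M'\cong M\oo M'$; composing the inverse of this isomorphism with the canonical normal quotient $M\wm M'\to M\oo_m M'=B(H)$ gives exactly the normal extension of the multiplication map $M\oo M'\to B(H)$ required in $(iii)$. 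Conversely, if $(iii)$ holds, then $M\oo M'$ dominates $M\oo_m M'$; combined with the fact that $M\oo M'$, being a factor, is minimal among weak* tensor products of $M$ and $M'$ (as noted in the Remark), one deduces that $M\oo M'$ shares the universal property characterizing $M\wm M'$, giving $(ii)$.

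The main obstacle, and the genuinely substantial step, is $(iii)\Rightarrow(iv)$. Here I expect to invoke the Choi--Effros characterization quoted just before the theorem: the multiplication map $m\fn M\odot M'\to B(H)$ is $\min$-continuous if and only if $M$ is injective. The hypothesis $(iii)$ gives a \emph{normal} extension of $m$ from $M\oo M'$, which is a priori stronger than mere $\min$-continuity, so injectivity of $M$ follows. The real work is upgrading ``injective'' to ``direct product of type I factors.'' For this I would decompose $M$ into its type I, type II, and type III parts via the central decomposition, and argue that the normality of the multiplication extension forces the type II and type III summands to vanish: if $M$ had a nontrivial summand of type II or III, one could restrict to a type II or III factor (via a direct integral / central projection argument) and contradict the earlier observation that for such a factor $M_0$, the representations $M_0\oo M_0'$ and $M_0\oo_m M_0'=B(H_0)$ are not quasi-equivalent (they are factors of different type), so no normal extension of the multiplication map can exist. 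This reduces $M$ to type I, and a type I von Neumann algebra for which the multiplication map extends normally must in fact be a direct product of type I \emph{factors} rather than having a nontrivial abelian part in its center; the final step is to rule out diffuse central summands, again using that the normal multiplication extension fails when tensoring an abelian diffuse algebra against itself. The delicate point throughout is passing between global statements about $M$ and local statements about its factor summands while preserving normality of the multiplication map, which I would manage through the central decomposition and the behavior of commutants under cutting by central projections.
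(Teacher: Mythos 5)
Your cycle $(iv)\Rightarrow(i)\Rightarrow(ii)\Rightarrow(iii)\Rightarrow(iv)$ matches the paper's structure, and the easy links are handled as the paper handles them: $(i)\Rightarrow(ii)$ is trivial, and $(iv)\Rightarrow(i)$ is exactly Proposition \ref{type I} plus Lemma \ref{products}. For $(ii)\Rightarrow(iii)$ your direct argument via maximality is workable but stated imprecisely: for a non-factor $M$ the multiplication map need not be injective (take $M=M'$ the diagonal algebra in $B(\C^2)$), so ``$M\oo_m M'$'' is not a weak* tensor product and $m(M\odot M')''=Z(M)'$, not $B(H)$; to get a normal extension of $m$ from $M\wm M'$ one should pass through the honest weak* tensor product $\alpha=\iota\oplus m$ and compress to the $m$-summand (a projection in the commutant, hence normal). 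This is essentially the paper's device, used there contrapositively. Your converse $(iii)\Rightarrow(ii)$ is actually wrong as stated --- ``$M\oo M'$, being a factor'' holds only when $M$ is a factor --- but it is also redundant in your cycle, so no harm is done.

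The genuine gap is in $(iii)\Rightarrow(iv)$, which is the entire content of the theorem, and it occurs at two points. First, your order of operations fails: you propose to kill the type II and III parts \emph{before} dealing with the center, by ``restricting to a type II or III factor (via a direct integral / central projection argument).'' When the relevant central summand has diffuse center there are no central projections cutting down to factors, and the fiber maps of a direct integral decomposition are not normal (evaluation $L^\infty(X,\mu)\to\C$ is non-normal for diffuse $\mu$), so normality of $\widetilde m$ does not pass to fibers. The paper avoids this by proving \emph{first} that $Z(M)$ is atomic; only then do minimal central projections exist, $M=\prod_i e_iM$ with each $e_iM$ a genuine factor, and the factor-case observation (a type II or III factor has $M\oo M'$ of type II or III while $B(H)$ is type I, so no normal quotient exists) finishes. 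Second, and more seriously, the step you defer to --- ``the normal multiplication extension fails when tensoring an abelian diffuse algebra against itself'' --- is precisely the hard kernel of the proof, and you assert it without argument. The paper proves it by an explicit measure-theoretic construction: writing $Z(M)=L^\infty(X,\mu)$ with $\mu$ nondiscrete, choosing a compact $K$ with $\mu(K)>\sum_{x\in K}\mu(\{x\})$, and exhibiting two nets converging weak* to the diagonal indicator $1_\Delta$ --- Urysohn functions supported on shrinking neighborhoods of $\Delta$, forcing $\widetilde m(1_\Delta)=1_K$, versus the atomic net $h_F=\sum_{x\in F}1_{\{x\}}\otimes 1_{\{x\}}$, whose images under $m$ have total mass converging to $\sum_{x\in K}\mu(\{x\})<\mu(K)$ --- so that $\widetilde m$ cannot be normal. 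Without this computation (or a substitute) your proposal does not close. Note also that your Choi--Effros detour, while correct ($(iii)$ does give $\min$-continuity of $m$, hence injectivity of $M$), buys nothing toward $(iv)$: injective von Neumann algebras include all abelian ones, so injectivity cannot see atomicity of the center; the paper cites Choi--Effros only as an analogy, not as an ingredient.
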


\begin{proof}
(i) $\Rightarrow$ (ii): Trivial.

(ii) $\Rightarrow$ (iii): Suppose that the multiplication map $m\fn M\odot M'\to B(H)$ does not extend to a normal $*$-homomorphism $\widetilde m\fn M\overline{\otimes}M'\to B(H)$. Then $m$ is not quasi-contained in the spatial embedding $\iota\fn M\odot M'\to B(H\otimes H)$. Defining $\alpha=\iota\oplus m$, we have that $M\overline\otimes_{\alpha} M'$ is a nonequivalent weak* tensor product to $M\overline\otimes M'$ with the property that the identity map on $M\odot M'$ extends to a normal $*$-homomorphism from $M\overline\otimes_{\alpha}M'$ to $M\overline{\otimes}M'$. It follows that $M\overline{\otimes}M'$ and $M\wm M'$ are inequivalent weak* tensor products.

(iii) $\Rightarrow$ (iv): Suppose that $m$ extends to a normal $*$-homomorphism $\widetilde m\fn M\overline\otimes M'\to B(H)$. We claim that $Z(M)=M\cap M'$ must then be of the form $\ell^\infty(S)$ for some set $S$. Indeed, suppose otherwise. Then, without loss of generality, we may assume that $Z(M)=L^\infty(X,\mu)$ for some locally compact space $X$ equipped with a necessarily nondiscrete positive Radon measure $\mu$. Since $\mu$ is nondiscrete, there exists a Borel subset $E$ of $X$ such that $\mu(E)>\sum_{x\in E}\mu(\{x\})$. By inner regularity of $\mu$, we can then find a compact subset $K$ of $X$ so that $\mu(K)>\sum_{x\in K} \mu(\{x\})$. We will show that $\mu$ being nondiscrete leads to a contradiction of the normality of $\widetilde m$.% on its restriction to $L^\infty(K)\overline{\otimes}L^\infty(K)\subset M\overline{\otimes}M'$.

Let $f_1,\ldots,f_n$ and $g_1,\ldots,g_n$ be functions in $C(K)\subset L^\infty(X)$. Then
$$m(f_1\otimes g_1+\ldots+f_n\otimes g_n)(x)=(f_1\otimes g_1+\ldots+f_n\otimes g_n)(x,x)$$
for $x\in X$. By norm continuity, it follows that $\widetilde m(f)(x)=f(x,x)$ for almost every $f\in C(K\times K)=C(K)\otimes_{\min} C(K)\subset L^\infty(X)\oo L^\infty(X)$.

Choose a sequence of descending relatively open subsets $U_1\supset U_2\supset\ldots$ of $K\times K$ so that $U_n$ contains $\Delta:=\{(x,x) : x\in K\}$ for every $n$ and $(\mu\times\mu)(U_n)\to (\mu\times\mu)(\Delta)$ as $n\to\infty$. By Urysohn's lemma, there exists functions $f_1,f_2,\ldots\in C(K\times K)\subset L^\infty(X)\oo L^\infty(X)$ mapping into $[0,1]$ such that $f_n(x,x)=1$ for every $x\in K$ and $f_n(x,y)=0$ for $(x,y)\not\in U_n$. Then, by Lebesgue's dominated convergence theorem,
$$\int f_n(x,y)g(x,y)\,d\mu(x)d\mu(y)\to \int_{\Delta} g(x,y)\,d\mu(x)d\mu(y)$$
as $n\to \infty$ for every $g\in L^1(X\times X)$. Hence, $f_n\to 1_\Delta$ in the weak* topology. Then, since $\widetilde m$ is normal and $\widetilde m(f_n)=1_K$ for every $n$, we have that $\widetilde m(1_\Delta)=1_K$.

Next, define a net of elements in $L^\infty(X\times X)$ indexed under reverse inclusion by the finite subsets $F$ of $K$ by $h_F=\sum_{x\in F}1_{\{(x,x)\}}=\sum_{x\in F}1_{\{x\}}\otimes 1_{\{x\}}$. Then
\begin{eqnarray*}
\int h_F(x,y)g(x,y)\,d\mu(x)d\mu(y) &=& \sum_{x\in F} g(x,x)\mu(x)^2 \\
&\to & \sum_{x\in K} g(x,x)\mu(x)^2 \\
&=& \int_{\Delta} g(x,y)\,d\mu(x)d\mu(y)
\end{eqnarray*}
in the limit as $F\to K$ for every $g\in L^1(X\times X)$. Hence, $h_F$ converges weak* to $1_\Delta$.

Observe that
$$ \int_K m(h_F)(x)\,d\mu(x)=\sum_{x\in F} \mu(\{x\})\to \sum_{x\in K}\mu(\{x\})$$
as $F\to K$. Recall that $\mu(K)>\sum_{x\in K} \mu(\{x\})$. It follows that $\widetilde m(h_F)$ does not converge to $1_K$. This contradicts the normality of $\widetilde m$ and, so, we conclude that $\mu$ must be a discrete measure. Hence, $Z(M)$ is of the form $\ell^\infty(S)$ for some set $S$.

Since $Z(M)$ is of the form $\ell^\infty(S)$, there exists a set $\{e_i : i\in S\}$ of minimal central projections in $M$ such that $\sum_{i\in S} e_i=1$. Then $M=\prod_{i\in S} e_i M$ where each term $e_iM\subset B(e_iH)$ is a factor by the minimality of $e_i$. Therefore the desired result follows from Lemma \ref{products} since a factor has the WTU property if and only if it is of type I.

(iv) $\Rightarrow$ (i): This is clear from Proposition \ref{type I} and Lemma \ref{products}.
\end{proof}

\section{Weak* tensor product completions of $L^\infty(\R)\odot L^\infty(\R)$}

This section is dedicated to studying weak* tensor product completions of $L^\infty(\R)\odot L^\infty(\R)$. The main result theorem of this section is the construction of $2^{\mf c}$ distinct such weak* tensor product completions. As a consequence of our constructions, we also find that the weak* maximal tensor product $L^\infty(\R)\wm L^\infty(\R)$ does not have a separable predual despite $L^\infty(\R)$ having separable predual $L^1(\R)$.

The approach that we take in constructing weak* tensor products is to use the fact that $L^\infty(\R)$ contains a weak* dense copy of the continuous bounded functions $C_b(\R)$ (we choose to use $C_b(\R)$ over $C_0(\R)$ for convenience since $C_b(\R)$ is unital) and, hence, any weak* tensor product completion $L^\infty(\R)\oo_\alpha L^\infty(\R)$ of $L^\infty(\R)\odot L^\infty(\R)$ must contain a weak* dense copy of $C_b(\R)\otimes_{\min} C_b(\R)$. In particular, this implies that if $\pi\fn C_b(\R)\otimes_{\min}C_b(\R)\to B(H)$ is a $*$-representation such that the restrictions $\pi|_{C_b(\R)\otimes 1}$ and $\pi|_{1\otimes C_b(\R)}$ are quasi-contained in the canonical representation $\sigma\fn C_b(\R)\to L^\infty(\R)$, then $\pi$ extends uniquely to a normal $*$-representation $\widetilde\pi\fn L^\infty(\R)\wm L^\infty(\R)\to B(H)$.

We will identify $C_b(\R)\otimes_{\min} C_b(\R)$ as being a $C^*$-subalgebra of $C_b(\R\times\R)$. For every $x\in\R$, define an (unbounded) measure $\mu_x$ on $\R\times\R$ by
$$ \int f\,d\mu_x=\int_\R f(y,x+y)\,dy $$
for $f\in C_c(\R\times\R)$.
Further, we let 
$$\pi_x\fn C_b(\R)\otimes_{\min} C_b(\R)\to L^\infty(\R\times\R,\mu_x)$$
 be the natural inclusion.
These $*$-representations $\pi_x$ will be our building blocks in constructing many weak* tensor product completions of $L^\infty(\R)\odot L^\infty(\R)$.

\begin{lemma}\label{subrep}
For every $x\in\R$, the $*$-representations $\pi_{x}|_{C_b(\R)\otimes 1}$ and $\pi_{x}|_{1\otimes C_b(\R)}$ are unitarily equivalent to the canonical representation $\sigma\fn C_b(\R)\to L^\infty(\R)$.
\end{lemma}

\begin{proof}
Define a map from $U\fn L^2(\mu_x)\to L^2(\R)$ by $U(g)(y)=g(y,x+y)$. Then $U$ is clearly a well defined surjective isometry. Further, we observe that if $f\in C_b(\R)$ and $\xi\in L^2(\mu_x)$, then
$$ U\big(\pi_x(f\otimes 1)\xi\big)(y)=f(y)\xi(y,x+y)=\big(\sigma(f)U(\xi)\big)(y)$$
for almost every $y\in\R$. So we have shown that $\pi_{x}|_{C_b(\R)\otimes 1}$ is unitarily equivalent to a subrepresentation of $\sigma$. A similar argument shows that the same holds for $\pi_{x}|_{1\otimes C_b(\R)}$.
\end{proof}

%This lemma implies that $\pi_w$ extends to a $*$-homomorphism of $L^\infty(\T)\odot L^\infty(\T)$ with the property that the restrictions $\pi_w|_{L^\infty(\T)\otimes 1}$ and $\pi_w|_{1\otimes L^\infty(\T)}$ are normal $*$-homomorphisms.

The author wishes to thank Nico Spronk for suggesting the following lemma.

Let $X$ be a measure space. We say that a measure $\mu$ on $X$ is absolutely continuous with respect to a family of measures $\{\nu_i : i\in I\}$ on $X$ if $\nu_i(E)=0$ for every $i\in I$ implies that $\mu(E)=0$ whenever $E\subset X$ is a measurable subset.

\begin{lemma}\label{measure spaces}
Let $X$ be a locally compact space, and $\mu$ and $\{\nu_i : i\in I\}$ be Radon measures on $X$. Denote the canonical inclusions of $C_0(X)$ in $L^\infty(\mu)$ and $L^\infty(\nu_i)$ by $\pi$ and $\sigma_i$, respectively. Then $\pi$ is quasi-contained in $\bigoplus_{i\in I}\sigma_i$ if and only if $\mu$ is absolutely continuous with respect to $\{\nu_i : i\in I\}$.
\end{lemma}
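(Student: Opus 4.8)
The plan is to convert the statement about quasi-containment into the existence of a normal $*$-homomorphism and then argue measure-theoretically. By the criterion recalled in Section~\ref{definitions section}, $\pi$ is quasi-contained in $\rho:=\bigoplus_{i\in I}\sigma_i$ precisely when the map $\rho(f)\mapsto\pi(f)$ ($f\in C_0(X)$) extends to a normal $*$-homomorphism $\Phi\fn\rho(C_0(X))''\to\pi(C_0(X))''$. Here $\pi(C_0(X))''=L^\infty(\mu)$, while $\rho(C_0(X))''$ is the abelian von Neumann subalgebra of the $\ell^\infty$-product $\prod_{i\in I}L^\infty(\nu_i)$ generated by the diagonal image of $C_0(X)$. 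Passing to preduals, I would note that a normal functional on $\prod_i L^\infty(\nu_i)$ is an $\ell^1$-summable family $(g_i)_i$ with $g_i\in L^1(\nu_i)$ (so only countably many terms are nonzero), pairing with the diagonal copy of $f$ as $\sum_i\int f g_i\,d\nu_i$. Identifying $C_0(X)^*=M(X)$, this says that $\Phi$ exists if and only if every $g\,d\mu$ with $g\in L^1(\mu)$ lies in the set $\mathcal N$ of measures of the form $\sum_i g_i\,d\nu_i$ (a norm-convergent sum). Thus the whole lemma reduces to the assertion that $L^1(\mu)\subseteq\mathcal N$ if and only if $\mu$ is absolutely continuous with respect to $\{\nu_i\}$.

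For the easy direction I would argue by contraposition. If $\mu$ is not absolutely continuous with respect to $\{\nu_i\}$, choose a Borel set $E$ with $\nu_i(E)=0$ for all $i$ and $\mu(E)>0$; using that $\mu$ is Radon (inner regular, with compact sets of finite measure) I may shrink $E$ so that $0<\mu(E)<\infty$. Then $1_E\,d\mu\in L^1(\mu)$, but the bounded linear functional $\lambda\mapsto\lambda(E)$ on $M(X)$ annihilates every $g_i\,d\nu_i$ (since $\nu_i(E)=0$) and hence, by continuity, all of $\mathcal N$, while sending $1_E\,d\mu$ to $\mu(E)>0$. So $1_E\,d\mu\notin\mathcal N$, giving $L^1(\mu)\not\subseteq\mathcal N$, and $\pi$ is not quasi-contained in $\rho$.

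The hard direction is the construction. Assuming $\mu\ll\{\nu_i\}$ and fixing $g\in L^1(\mu)$, I would reduce (splitting into real/imaginary and positive/negative parts) to showing that a finite \emph{positive} measure $\kappa=g\,d\mu$ lies in $\mathcal N$. The heart of the argument is an exhaustion: among all countable families of pairs $(A_n,i_n)$ with the $A_n$ disjoint Borel sets and $1_{A_n}\kappa\ll\nu_{i_n}$, choose one maximizing $\kappa\big(\bigcup_n A_n\big)$ (the supremum is finite because $\kappa$ is). Writing $U=\bigcup_n A_n$, a Lebesgue-decomposition argument should show that if $\kappa(X\setminus U)>0$ then $1_{X\setminus U}\kappa$ has a nonzero absolutely continuous part with respect to some single $\nu_i$, contradicting maximality; this is exactly where $\mu\ll\{\nu_i\}$ must be used, to rule out that $1_{X\setminus U}\kappa$ is singular to every $\nu_i$ simultaneously. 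Consequently $\kappa(X\setminus U)=0$, and the Radon--Nikodym densities $h_n=d(1_{A_n}\kappa)/d\nu_{i_n}\in L^1(\nu_{i_n})$ give $\kappa=\sum_n h_n\,d\nu_{i_n}$, a norm-convergent sum since $\sum_n\|h_n\|_{L^1(\nu_{i_n})}=\sum_n\kappa(A_n)=\kappa(X)<\infty$. Hence $\kappa\in\mathcal N$.

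I expect the main obstacle to be this exhaustion step, specifically the passage from ``$1_{X\setminus U}\kappa$ is singular to every individual $\nu_i$'' to an actual common null set witnessing a failure of absolute continuity. The delicate point is that $I$ may be uncountable and the $\nu_i$ may be infinite, so one cannot naively intersect uncountably many carrier sets; it is the finiteness of $\kappa$ (hence its $\sigma$-finiteness) that should let me localize to a countable subfamily and close the argument. Checking that the disjointness of the $A_n$ keeps the total variation controlled—so that the representing sum genuinely converges in $M(X)$ and the resulting $\Phi$ is normal—is the remaining essential verification.
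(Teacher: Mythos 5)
Your predual reduction is sound, and your ``easy'' direction is correct and genuinely different from the paper's route. The paper proves that quasi-containment forces absolute continuity by realizing $f\mapsto\int_K f\,d\mu$ as a vector state of $\pi$ and showing, via outer regularity, Urysohn functions and dominated convergence, that it stays at norm distance at least $\mu(K)$ from every finite sum of vector states of an amplification of $\bigoplus_i\sigma_i$. Your observation that the single bounded functional $\lambda\mapsto\lambda(E)$ on $M(X)$ is total-variation continuous, annihilates every norm-convergent sum $\sum_i g_i\,d\nu_i$, but not $1_E\,d\mu$, achieves the same separation in one line; the two arguments have the same spirit (separating a $\pi$-normal functional from the $\rho$-normal ones) but yours is cleaner. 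Note that this is the only direction of Lemma \ref{measure spaces} the paper actually uses afterwards (Corollary \ref{not quasiequiv} and the corollary on separable preduals).

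The exhaustion step, however, fails --- and not for a reason that the finiteness of $\kappa$ can repair: the implication you flagged as the delicate point is simply false, and with it this direction of the lemma as stated. Take $X=[0,1]$, $\mu$ Lebesgue measure, and $\{\nu_i : i\in I\}=\{\delta_x : x\in[0,1]\}$. Only the empty set is null for every $\delta_x$, so $\mu$ is absolutely continuous with respect to the family in the stated sense; yet $\kappa=\mu$ is finite and singular to each individual $\delta_x$, any $A$ with $1_A\kappa\ll\delta_x$ is $\kappa$-null, so your maximal set $U$ has $\kappa(U)=0$ while $1_{X\setminus U}\kappa$ has zero absolutely continuous part with respect to every single $\nu_i$. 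Singularity with respect to each member of an uncountable family need not produce a common null set, and indeed $\pi$ is \emph{not} quasi-contained in $\bigoplus_x\sigma_x$ here: by your own reduction, quasi-containment would express each $g\,d\mu$ as a countably supported atomic measure $\sum_n g_n\,d\delta_{x_n}$, which is impossible since $g\,d\mu$ is atomless (one can also check directly that $\pi$ is disjoint from every amplification of $\bigoplus_x\sigma_x$, because every invariant subspace there is spanned by common eigenvectors and the Lebesgue representation has none). So the correct hypothesis is precisely your intermediate condition $L^1(\mu)\subseteq\mathcal{N}$, i.e., every finite measure absolutely continuous with respect to $\mu$ is a norm-convergent countable sum of pieces each $\ll$ a single $\nu_i$; your exhaustion argument does correctly show that family-absolute-continuity implies this when $I$ is countable. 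You should not read this failure as a misunderstanding on your part: the paper's own proof of this direction asserts the identity $\bigl(\bigoplus_i\sigma_i\bigr)(\mathcal{M}_b(X))=\bigl(\bigoplus_i\sigma_i\bigr)(C_b(X))''$, which fails for the same family (the left-hand side is the image of the bounded Borel functions, while the right-hand side is all of $\ell^\infty([0,1])$, and the induced $*$-homomorphism admits no normal extension), so the lemma's ``if'' direction is flawed in the paper as well. Since only the direction you proved is invoked later, Corollary \ref{not quasiequiv} and Theorem \ref{2^c} are unaffected.
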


\begin{proof}
If $\mu$ is absolutely continuous with respect to $\{\nu_i : i\in I\}$, then the identity map on the bounded measurable functions $\mc M_b(X)$ of $X$ clearly extends to a surjective $*$-isomorphism from $\big(\bigoplus_{i\in I}\sigma_i\big)(\mc M_b(X))$ to $\pi(\mc M_b(X))$. Since $\pi(\mc M_b(X))=L^\infty(\mu)=\pi(C_b(X))''$ and $\big(\bigoplus_{i\in I} \sigma_i\big)(\mc M_b(X))=\big(\bigoplus_{i\in I} \sigma_i\big)(C_b(X))''$, this implies that $\pi$ is quasi-contained in $\bigoplus_{i\in I}\sigma_i$.

Next we suppose towards a contradiction that $\pi$ is quasi-contained in $\bigoplus_{i\in I}\sigma_i$, but $\mu$ is not absolutely continuous with respect to $\{\nu_i : i\in I\}$. Since $\pi$ is quasi-contained in $\bigoplus_{i\in I}\sigma_i$, there exists a cardinal $\omega$ such that $\pi$ is unitarily equivalent to a subrepresentation of the amplification $\omega\cdot \bigoplus_{i\in I} \sigma_i$. Note that since $\mu$ is not absolutely continuous with respect to $\{\nu_i : i\in I\}$, there exists a compact set $K$ such that $\mu(K)>0$ but $\nu_i(K)=0$ for every $i\in I$ by inner regularity. So the function $\varphi\fn C_b(X)\to \C$ defined by
$$\varphi(f)=\int_K f\,d\mu=\lla \pi(f) \xi,\xi\rra,$$
where $\xi\in L^2(\mu)$ is the characteristic function $1_K$, is a vector state of $\varphi$. This implies that $\varphi$ must also be a vector state of $\omega\cdot \bigoplus_{i\in I} \sigma_i$.

Observe that the Hilbert space on which $\omega\cdot \bigoplus_{i\in I} \sigma_i$ acts is
$$ \bigoplus_{i\in I} L^2(\nu_i)^{\oplus \omega}.$$
So, assuming that $\varphi$ is a vector state of $\omega\cdot \bigoplus_{i\in I} \sigma_i$, then we can approximate $\varphi$ arbitrarily well in norm by maps $\psi\fn C_b(X)\to \C$ of the form
$$ \psi(f)=\sum_{j=1}^m\sum_{k=1}^{n_j}\lla \sigma_{i_j}(f)\xi_{jk},\xi_{jk}\rra =\sum_{j=1}^m\sum_{k=1}^{n_j} \int f|\xi_{jk}|^2\,d\nu_{i_j}$$
for choices of $m,n_j\in \N$, $i_j\in I$, and $\xi_{jk}\in L^2(\mu_{i_j})$. We will show that this is not possible.

By outer regularity, we can choose a decreasing sequence of open subsets $U_1\supset U_2\supset \cdots$ of $X$ containing $K$ such that $\nu_{i_j}(U_p)\to 0$ as $p\to \infty$ for all $j=1,\ldots, m$. By Urysohn's lemma, there exists functions $f_p\fn X\to [0,1]$ such that $f_p(x)=1$ for $x\in K$ and $f_p(x)=0$ for $x\not\in U_p$. So, by Lebesgue's dominated convergence theorem,
$$ \int f_p|\xi_{jk}|^2\,d\mu_{i_j}\to 0$$
as $p\to \infty$ and all $j=1,\ldots, m$ and $k=1,\ldots, n_j$. Hence, $\psi(f_p)\to 0$ as $p\to \infty$. Since $\varphi(f_p)=\mu(K)$ for every $p$, this implies that $\|\psi -\varphi\|\geq \mu(K)>0$. This contradicts that we should be able to approximate $\varphi$ arbitrarily well with functionals with the form of $\psi$. So we conclude that $\pi$ is not quasi-contained in $\bigoplus_{i\in I}\sigma_i$.
\end{proof}

\begin{corollary}\label{not quasiequiv}
For each $x\in\R$, the representation $\pi_x$ is not quasi-contained in
$$\bigoplus_{x'\neq x}\pi_{x'}\oplus \sigma,$$
where $\sigma\fn C_b(\R)\otimes_{\min} C_b(\R)\to L^\infty(\R)\oo L^\infty(\R)$ denotes the canonical inclusion. 
\end{corollary}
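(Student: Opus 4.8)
The plan is to deduce the corollary from Lemma \ref{measure spaces} applied on the locally compact space $X=\R\times\R$. First I would identify the Radon measures underlying the representations in sight. By definition $\pi_x$ is the canonical inclusion of $C_b(\R)\otimes_{\min}C_b(\R)$ into $L^\infty(\mu_x)$, where $\mu_x$ is the pushforward of Lebesgue measure $\lambda$ under $y\mapsto(y,x+y)$, hence is concentrated on the line $\ell_x:=\{(y,x+y):y\in\R\}$; likewise $\pi_{x'}$ corresponds to $\mu_{x'}$, concentrated on $\ell_{x'}$. The canonical inclusion $\sigma$ maps into $L^\infty(\R)\oo L^\infty(\R)=L^\infty(\R\times\R,\lambda\times\lambda)$, so it is the inclusion associated to the Radon measure $\lambda\times\lambda$. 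Thus the target $\bigoplus_{x'\ne x}\pi_{x'}\oplus\sigma$ is, after these identifications, a direct sum of inclusions of exactly the type appearing in Lemma \ref{measure spaces}, indexed by the family $\{\mu_{x'}:x'\ne x\}\cup\{\lambda\times\lambda\}$.

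There is one bookkeeping point to settle before invoking the lemma: Lemma \ref{measure spaces} concerns representations of $C_0(X)$, whereas $\pi_x$ and $\sigma$ are representations of $C_b(\R)\otimes_{\min}C_b(\R)$. For the direction I need this is harmless. Since $C_0(\R\times\R)=C_0(\R)\otimes_{\min}C_0(\R)\subseteq C_b(\R)\otimes_{\min}C_b(\R)$, quasi-containment of $\pi_x$ in $\bigoplus_{x'\ne x}\pi_{x'}\oplus\sigma$ passes immediately to quasi-containment of the restrictions to $C_0(\R\times\R)$, because a subrepresentation of an amplification restricts to a subrepresentation of an amplification. The restriction of $\pi_x$ to $C_0(\R\times\R)$ is precisely the inclusion $C_0(\R\times\R)\to L^\infty(\mu_x)$, and similarly for each summand of the target, so the restricted representations are exactly of the form covered by the lemma. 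Hence it suffices to show that $\mu_x$ is \emph{not} absolutely continuous with respect to the family $\{\mu_{x'}:x'\ne x\}\cup\{\lambda\times\lambda\}$.

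To witness the failure of absolute continuity I would take the line $\ell_x$ itself, or the compact segment $E:=\{(y,x+y):y\in[0,1]\}$ to stay with finite measures. On one hand $\mu_x(E)=1>0$ since $\mu_x$ is the pushforward of $\lambda$ along $y\mapsto(y,x+y)$. On the other hand $\ell_x$ and $\ell_{x'}$ are disjoint when $x'\ne x$ (an intersection point would force $x=x'$), so $\mu_{x'}(E)=0$; and $E$ lies in a line, so $(\lambda\times\lambda)(E)=0$. Thus every measure in the family annihilates $E$ while $\mu_x(E)>0$, so $\mu_x$ is not absolutely continuous with respect to the family. By Lemma \ref{measure spaces} the restricted representations are not in a quasi-containment relation, and therefore neither are the originals.

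I expect the only genuine subtlety to be the reconciliation of the $C_b(\R)\otimes_{\min}C_b(\R)$ setting with the $C_0(X)$ setting of Lemma \ref{measure spaces}; everything else reduces to the measure-theoretic disjointness of parallel lines once the correct witnessing set—the line $\ell_x$ carrying $\mu_x$—has been identified. Should one prefer to sidestep the restriction argument, one could instead rerun the proof of Lemma \ref{measure spaces} verbatim, observing that the Urysohn functions used there may be chosen in $C_c(\R\times\R)\subseteq C_b(\R)\otimes_{\min}C_b(\R)$, but invoking the lemma directly is cleaner.
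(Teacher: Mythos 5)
Your proof is correct and is precisely the argument the paper intends: the corollary is stated without proof as an immediate application of Lemma \ref{measure spaces}, with the measures $\mu_x$, $\{\mu_{x'} : x'\neq x\}$ and $\lambda\times\lambda$ on $X=\R\times\R$, and the compact segment of the line $\ell_x$ witnessing the failure of absolute continuity exactly as you describe. Your care in reconciling the $C_b(\R)\otimes_{\min}C_b(\R)$ setting with the $C_0(X)$ hypothesis of the lemma (restricting along $C_0(\R\times\R)=C_0(\R)\otimes_{\min}C_0(\R)$ and noting quasi-containment passes to restrictions) is a point the paper glosses over, and you handle it correctly.
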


Before proving the main theorem of this section, we pause to note a further corollary of the proof of Lemma \ref{measure spaces}.

\begin{corollary}
The von Neumann algebra $L^\infty(\R)\wm L^\infty(\R)$ does not have a separable predual.
\end{corollary}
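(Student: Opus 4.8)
Write $M=L^\infty(\R)\wm L^\infty(\R)$. The plan is to extract from the family $\{\pi_x\}_{x\in\R}$ an uncountable set of pairwise orthogonal nonzero central projections in $M$, which is incompatible with $M$ having a separable predual. First I would promote each $\pi_x$ to a normal representation of $M$. By Lemma \ref{subrep} the restrictions $\pi_x|_{C_b(\R)\otimes 1}$ and $\pi_x|_{1\otimes C_b(\R)}$ are unitarily equivalent to $\sigma$, hence quasi-contained in it, so by the extension principle recorded at the start of this section $\pi_x$ extends uniquely to a normal $*$-representation $\widetilde{\pi_x}\fn M\to B(L^2(\mu_x))$; likewise the canonical inclusion $\sigma\fn C_b(\R)\otimes_{\min}C_b(\R)\to L^\infty(\R)\oo L^\infty(\R)$ extends to a normal representation $\widetilde\sigma$ of $M$. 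Since $C_b(\R)\odot C_b(\R)$ is weak* dense in $M$ and all these representations are normal, quasi-containment of $\widetilde{\pi_x},\widetilde\sigma$ as representations of $M$ is detected on this dense subalgebra and therefore agrees with quasi-containment of the underlying representations $\pi_x,\sigma$ of $C_b(\R)\otimes_{\min}C_b(\R)$.

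Next I would pass to central supports. For a normal representation $\rho$ of $M$ write $z(\rho)\in Z(M)$ for its central support, i.e. the complement of the central projection generating the weak* closed ideal $\ker\rho$. Two standard facts about normal representations of a von Neumann algebra are that $\rho_1$ is quasi-contained in $\rho_2$ if and only if $z(\rho_1)\leq z(\rho_2)$, and that $z(\bigoplus_i\rho_i)=\bigvee_i z(\rho_i)$. Combining these with the previous paragraph, Corollary \ref{not quasiequiv} yields, for every $x\in\R$,
$$z(\widetilde{\pi_x})\ \not\leq\ q_x:=\Big(\bigvee_{x'\neq x}z(\widetilde{\pi_{x'}})\Big)\vee z(\widetilde\sigma).$$
Set $p_x:=z(\widetilde{\pi_x})(1-q_x)$. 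Then $p_x\neq 0$ precisely because $z(\widetilde{\pi_x})\not\leq q_x$, and the family $\{p_x\}_{x\in\R}$ is pairwise orthogonal: for $y\neq x$ one has $z(\widetilde{\pi_y})\leq q_x$, so $p_y\leq q_x$ while $p_x\leq 1-q_x$, whence $p_xp_y\leq (1-q_x)q_x=0$.

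Finally, $\{p_x\}_{x\in\R}$ is an uncountable family of pairwise orthogonal nonzero projections in $M$, so $M$ is not $\sigma$-finite; concretely, choosing for each $x$ a normal state $\varphi_x$ with $\varphi_x(p_x)=1$ gives $\|\varphi_x-\varphi_y\|\geq \varphi_x(p_x)-\varphi_y(p_x)=1$ for $x\neq y$ (as $p_y\leq 1-p_x$ forces $\varphi_y(p_x)=0$), so $M_*$ contains an uncountable $1$-separated set and cannot be separable. I expect the main obstacle to be the bookkeeping in the second paragraph: one must be sure that the purely $C^*$-algebraic non-quasi-containment of Corollary \ref{not quasiequiv} transfers intact to the normal representations of $M$ and is then faithfully recorded by the order relation on central supports, since it is this translation --- rather than any new estimate --- that converts the measure-theoretic input of Lemma \ref{measure spaces} into an uncountable orthogonal family.
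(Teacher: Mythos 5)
Your proof is correct, but it takes a genuinely different route from the paper's. The paper argues quantitatively: it takes the explicit vector states $\varphi_x(f)=\int_{[0,1]}f(y,x+y)\,dy=\lla\pi_x(f)\xi,\xi\rra$, notes they lie in the predual because each $\pi_x$ extends normally to $L^\infty(\R)\wm L^\infty(\R)$, and then re-runs the Urysohn/dominated-convergence estimate from the \emph{proof} of Lemma \ref{measure spaces} (mutual singularity of the line measures $\mu_x$) to get $\|\varphi_x-\varphi_y\|\geq 1$ directly. You instead use only the \emph{statement} of Corollary \ref{not quasiequiv}, transfer the non-quasi-containment to the normal extensions $\widetilde{\pi_x},\widetilde\sigma$, and encode it in the order on central supports to manufacture a continuum of pairwise orthogonal nonzero central projections $p_x=z(\widetilde{\pi_x})(1-q_x)$. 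The transfer step you flag as the main obstacle does go through: by Kaplansky density and weak* continuity on bounded sets, two normal representations of $M$ agreeing on the weak* dense subalgebra coincide, and a subrepresentation of an amplification of $\sigma$ or of $\bigoplus_{x'}\pi_{x'}\oplus\sigma$ is cut by a projection in the commutant of the generated von Neumann algebra, hence is itself normal and is, by uniqueness of normal extensions, the extension of its restriction; likewise $z(\bigoplus_i\rho_i)=\bigvee_i z(\rho_i)$ and the characterization of quasi-containment by central supports are standard for normal representations. What each approach buys: the paper's argument is shorter and produces explicit $1$-separated normal states, at the cost of invoking the internals of Lemma \ref{measure spaces}'s proof rather than a stated result; yours is more structural, uses the corollary as a black box, and yields the strictly stronger conclusion that $L^\infty(\R)\wm L^\infty(\R)$ is not $\sigma$-finite (a continuum of orthogonal central projections), which the paper's family of states does not by itself give --- a $\sigma$-finite algebra such as a free group factor on uncountably many generators can still have nonseparable predual. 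One cosmetic slip: in your last paragraph the inequality $\varphi_y(p_x)=0$ follows from $p_x\leq 1-p_y$ together with $\varphi_y(1-p_y)=0$, not from ``$p_y\leq 1-p_x$'' as written, but this is immaterial since the $p_x$ are mutually orthogonal.
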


\begin{proof}
For each $x\in\R$, let $\varphi_x\fn C_b(\R)\otimes_{\min} C_b(\R)\to\C$ be the function defined by
$$\varphi_x(f)=\int_{[0,1]} f(y,x+y)\,dy=\lla \pi_x(f) \xi,\xi\rra$$
where $\xi\in L^2(\mu_x)$ is the characteristic function $1_{[0,1]\times[x,1+x]}$. Then, as  $\varphi_x$ is a vector state of $\pi_x$, it follows that $\varphi_x$ is in the predual of $L^\infty(\R)\wm L^\infty(\R)$. So the predual of $L^\infty(\R)\wm L^\infty(\R)$ cannot be separable since, by the proof of Lemma \ref{measure spaces}, $\|\varphi_x-\varphi_y\|\geq 1$ for all distinct $x,y\in \R$.
\end{proof}

\begin{theorem}\label{2^c}
$L^\infty(\R)\odot L^\infty(\R)$ admits $2^{\mf c}$ nonequivalent weak* tensor products.
\end{theorem}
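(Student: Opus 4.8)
The plan is to use the family $\{\pi_x : x \in \R\}$ constructed above as building blocks, and to distinguish weak* tensor products by which of these representations they contain. The key point, supplied by Corollary \ref{not quasiequiv}, is that each $\pi_x$ is \emph{independent} of the others: $\pi_x$ is not quasi-contained in the direct sum of all $\pi_{x'}$ with $x' \neq x$ together with the spatial representation $\sigma$. Since Lemma \ref{subrep} guarantees that the restrictions of each $\pi_x$ to $C_b(\R)\otimes 1$ and $1\otimes C_b(\R)$ are (quasi-)contained in $\sigma$, every direct sum of the form $\sigma \oplus \bigoplus_{x\in S}\pi_x$ extends, by the remark preceding Lemma \ref{subrep}, to a normal $*$-representation of $L^\infty(\R)\wm L^\infty(\R)$, hence defines a genuine weak* tensor product completion of $L^\infty(\R)\odot L^\infty(\R)$.

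First I would, for each subset $S \subseteq \R$, define a representation $\alpha_S = \sigma \oplus \bigoplus_{x\in S}\pi_x$ and verify that $\alpha_S$ satisfies the conditions of the definition of a weak* tensor product: injectivity follows because $\sigma$ is already injective, and the normality of $\alpha_S|_{L^\infty(\R)\otimes 1}$ and $\alpha_S|_{1\otimes L^\infty(\R)}$ follows from Lemma \ref{subrep} combined with the fact that a direct sum of representations quasi-contained in $\sigma$ remains quasi-contained in $\sigma$, and hence extends normally. This produces a family $\{L^\infty(\R)\oo_{\alpha_S} L^\infty(\R) : S \subseteq \R\}$ of weak* tensor product completions indexed by the power set of $\R$, which has cardinality $2^{\mf c}$.

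Next I would show that distinct subsets yield inequivalent completions. By the observation recorded before Proposition \ref{max}, two weak* tensor products $\oo_{\alpha_S}$ and $\oo_{\alpha_T}$ are equivalent precisely when $\alpha_S$ and $\alpha_T$ are quasi-equivalent. So it suffices to show that if $S \neq T$, say $x_0 \in S \setminus T$, then $\alpha_S$ is not quasi-equivalent to $\alpha_T$. The representation $\pi_{x_0}$ is a subrepresentation of $\alpha_S$, but by Corollary \ref{not quasiequiv} it is not quasi-contained in $\sigma \oplus \bigoplus_{x \neq x_0}\pi_x$, and in particular not in the smaller sum $\alpha_T$ (which omits $\pi_{x_0}$). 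Hence $\alpha_S$ is not quasi-contained in $\alpha_T$, so the two are not quasi-equivalent and the completions are inequivalent.

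I expect the main obstacle to be purely bookkeeping rather than conceptual: one must be careful that the map $S \mapsto \alpha_S$ is genuinely injective at the level of \emph{equivalence classes}, since a priori different index sets could yield quasi-equivalent representations. This is exactly what the independence in Corollary \ref{not quasiequiv} rules out, so the crux is to invoke it correctly — observing that $\pi_{x_0}$ being a nonzero subrepresentation of $\alpha_S$ forces $\pi_{x_0}$ to be quasi-contained in $\alpha_T$ whenever the two are quasi-equivalent, contradicting the corollary. Once this is set up, the count of $2^{\mf c}$ equivalence classes is immediate from $|\mathcal{P}(\R)| = 2^{\mf c}$.
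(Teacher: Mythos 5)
Your proposal is correct and takes essentially the same approach as the paper's own proof: the identical family $\alpha_S=\sigma\oplus\bigoplus_{x\in S}\pi_x$ indexed by subsets $S\subseteq\R$, with Lemma \ref{subrep} supplying the weak* tensor product conditions and Corollary \ref{not quasiequiv} forcing pairwise inequivalence. Your extra care in spelling out why $x_0\in S\setminus T$ obstructs quasi-equivalence merely makes explicit what the paper leaves to the reader.
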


\begin{proof}
Let $\sigma\fn C_b(\R)\otimes_{\min} C_b(\R)\to L^\infty(\R)\oo L^\infty(\R)$ be the canonical inclusion. For each subset $S\subset \R$, define $\alpha_S=\bigoplus_{x\in S}\pi_x\oplus \sigma$. Then, by Lemma \ref{subrep}, the restrictions $\alpha_S|_{C_b(\R)\otimes 1}$ and $\alpha_S|_{1\otimes C_b(\R)}$ are each quasi-equivalent to the canonical inclusion $C_b(\R)\to L^\infty(\R)$. Further, since $\alpha_S$ contains $\sigma$ as a subrepresentation, we conclude that each $\alpha_S$ extends to a $*$-representation $\widetilde\alpha_S$ of $L^\infty(\R)\odot L^\infty(\R)$ which satisfies the conditions required to construct a weak* tensor product. It is clear from Corollary \ref{not quasiequiv} that these weak* tensor products $L^\infty(\R)\overline\otimes_{\widetilde\alpha_S}L^\infty(\R)$ are pairwise nonequivalent for $S\subset \R$. Hence, $L^\infty(\R)\odot L^\infty(\R)$ admits $2^{\mf c}$ nonequivalent weak* tensor products.
\end{proof}

\begin{remark}
Let $M$ be any infinite dimensional abelian von Neumann algebra with separable predual. Recall that then $M=\ell^\infty(\N)$, $M=L^\infty(\R)$ or $M=L^\infty(\R)\oplus \ell^\infty(S)$ where $S=\{1,\ldots,n\}$ for some $n\in \N$ or $S=\N$. In particular, it follows that if $M$ does not have the WTU property (or, equivalently, $M\neq \ell^\infty(\N)$), then $M\odot M$ admits $2^{\mf c}$ distinct weak* tensor product completions.
\end{remark}

%\begin{prop}
%Let $\pi$ be a faithful normal *-rep'n of a von Neumann algebra $M$. Then $M$ has the WTU property iff $\pi$ decomposes as a direct sum of irreps.
%\end{prop}

%\begin{corollary}
%$VN(G)$ has the WTU property iff $\lambda$ is completely decomposable.
%\end{corollary}

%Note examples of such groups: $ax+b$ group, etc.

%\begin{theorem}
%Let $\Gamma$ be an infinite discrete group. Then $\lambda$ is not completely decomposable.
%\end{theorem}

\section*{Acknowledgements}
The author wishes to thank his advisor Nico Spronk for carefully reading this paper and his helpful comments. The author was partially supported by an NSERC Postgraduate Scholarship.

\end{document}